\providecommand{\e}[1]{\ensuremath{\cdot 10^{#1}}}
\def\bu{{\bar{u}}}
\newtheorem{thm}{Theorem}[section]
\newtheorem{lem}[thm]{Lemma}
\theoremstyle{remark}
\newtheorem{rem}[thm]{Remark}
\theoremstyle{definition}
\newcounter{algor}
\newtheorem{algorithm}[algor]{Algorithm}
\def\Id{{\rm Id}}
\newcommand{\comment}[1]{}
\let\oldmarginpar\marginpar
\renewcommand\marginpar[1]{\-\oldmarginpar[\raggedleft\footnotesize #1]
{\raggedright\footnotesize #1}}
\author{
Jordi-Llu\'is Figueras
\thanks
{
Department of Mathematics, Uppsala University, 
Box 480, 75106 Uppsala (Sweden). {\tt figueras@math.uu.se}.
}
\and
Rafael de la Llave 
\thanks
{
School of Mathematics, Georgia Institute of Technology, 
686 Cherry St NW, Atlanta, GA 30332, United States.
{\tt rafael.delallave@math.gatech.edu}. R.L. 
is supported in part by NSF DMS 1500943
}
}
\title{Numerical Computations and Computer Assisted Proofs
of Periodic Orbits of the Kuramoto-Sivashinsky Equation}
\begin{document}
	
\maketitle

\begin{abstract}
We present numerical results and computer assisted proofs of the existence of
periodic orbits for the Kuramoto-Sivashinky equation. These two results are
based on writing down the existence of periodic orbits as zeros of functionals.
This leads to the use of Newton's algorithm for the numerical computation of
the solutions and, with some a posteriori analysis in combination with rigorous
interval arithmetic, to the rigorous verification of the existence of
solutions.  This  is a particular case of the methodology developed in
\cite{ks_theoretical} for several types of orbits.  An  independent
implementation, covering overlapping but different ground, using different
functional setups, appears in \cite{ks_jp_marcio}. 
\end{abstract}

\begin{center}
{\bf \small Keywords} \\ \vspace{.05cm}
{ \small Evolution equation $\cdot$ Periodic Orbits $\cdot$ Contraction mapping \\
$\cdot$ Rigorous Computations $\cdot$ Interval Analysis}
\end{center}

\begin{center}
{\bf \small Mathematics Subject Classification (2010)} \\ \vspace{.05cm}
{ \small 35B32 $\cdot$ 35R20 $\cdot$ 47J15 $\cdot$  65G40 $\cdot$ 65H20 }
\end{center}

\section{Introduction.}\label{section: introduction}

In \cite{ks_theoretical} one can find a theoretical framework for the
computation and rigorous computer assisted verification of invariant objects
(fixed points, travelling waves, periodic orbits, attached invariant manifolds)
of semilinear parabolic equations of the form
\[
\partial_t u +Lu+N(u) = 0, 
\]
where $L$ is a linear operator and $N$ is nonlinear. The two operators $L$ and
$N$ are possibly unbounded but satisfy that $L^{-1}N$ is continuous.  The
methodolody of \cite{ks_theoretical}< is based on writing down an invariance
equation for these objects in suitable Banach spaces. One remarkable aspect of
this methodology is that if one applies a posteriori constructive methods one
can obtain computer assisted proofs and validity theorems.

In this paper we apply this methodology for the numerical computation and
a posteriori rigorous verification of the existence of periodic orbits in a
concrete example: the Kuramoto-Sivashinsky equation. This equation is the
parabolic semilinear partial differential equation

\begin{equation}\label{eq: ks equation}
\partial_t u + \left(\nu \partial_x^4+\partial_x^2\right)
u+\frac12 \partial_x\left(u^2\right) = 0,
\end{equation}
where $\nu > 0$ and $u\colon\mathbb{R}\times\mathbb{T}\rightarrow \mathbb{R}$.
($\mathbb{T} := \mathbb{R}/(2\pi \mathbb{Z}) $).  We restrict our study to  the
space of periodic odd functions, $u(t, x) = -u(t, -x)$,
\begin{equation*}
u(t, x) = \sum_{k=1}^\infty a_k(t)\sin(k x). 
\end{equation*}

The PDE \eqref{eq: ks equation} is used in the study of several physical
systems. For example, instabilities of dissipative trapped ion modes in plasmas
\cite{laqueyetaltri1975, Cohenetaltri1976}, instabilities in laminar flame
fronts \cite{Sivashinksy77} and  phase dynamics in reaction-diffusion systems
\cite{Kuramoto76}.

The Kuramoto-Sivashinky equation has been extensively studied both
theoretically and numerically \cite{Armbruster, Colletattracting,
Colletanalyticity, Ilyashenko, Nicolaenkoetaltri}.  It satisfies that its flow
is well-posed forward in time in Sobolev, $L^2$ and analytic spaces. In fact,
it is smoothing: For positive values of $t$ the solutions with $L^2$ initial
data are analytic in the space variable $x$. The phase portrait depends on the
value of the parameter $\nu$: The zero solution $ u(t, x) = 0 $ is a fixed
point with a finite dimensional unstable manifold. Its dimension is the number
of solutions of the integer inequality $k^2-\nu k^4 > 0$, $k > 0$.  For $\nu >
1$ the zero solution is a global attractor of the system.  For every $\nu > 0$,
the system has a global attractor. This attractor has finite dimension (it is
confined inside an inertial manifold), \cite{Jolly_Kevrekidis_Titi_90,
Foias_Nicolaenko_Sell_Teman_88, EdenFNT94, Temam97, Chueshov02, CFNT_book,
Robinson_book}. Finally, it has plenty of periodic orbits
\cite{LanCvitanovic2008, CvitanovicDavidchackSiminos2010}. For example, it is
known empirically that there are period doubling cascades
\cite{Papageorgiou_Smyrlis_90, Papageorgiou_Smyrlis_91} satisfying the same
universality properties than  in \cite{Feigenbaum78,TresserC78}. See Section
\ref{section: num explor} for a numerical exploration of the phase portrait of
the Kuramoto-Sivashinsky equation \eqref{eq: ks equation}.

In the literature several ways have been proposed for computing periodic orbits
of the Kuramoto-Sivashinsky equation: If the periodic orbit is attracting, one
can use an ODE solver for computing the evolution of the system using Galerkin
projections. Accordingly, starting at an initial point in the basin of
attraction and integrating forward in time one gets close to the periodic
orbit. If the periodic orbit is unstable, another classical technique is to
compute them as fixed points of some Poincar\'e map of the system. Another
approach, \emph{the Descent method}, is presented in
\cite{LanChandreCvitanovic2006, LanCvitanovic2008}. This is a method that,
given an initial guess of the periodic orbit, it evolves it under a variational
method minimizing the local errors of the initial guess. 

In this paper we implement another method based on solving, using
\emph{Newton's method}, a functional equation that periodic orbits satisfy. The
unknowns are the frequency and the parameterization of the periodic orbit.
This methodology permits us to write down a posteriori theorems that, with the
help of rigorous computer assisted verifications, lead us to the rigorous
verification of these periodic orbits by estimating all 
the sources of error (truncation, roundoff). In this paper, we 
carry out this estimates, so that the results we present are 
rigorous theorems on existence of periodic orbits. 

The Newton method, of course, has the shortcoming that it depends on having a
close initial guess; the descent method in practice has a larger domain of 
convergence. 
On the other hand, the Newton
method produces solutions to machine epsilon precision $\varepsilon_M$, whereas
the descent method, being a variational method, cannot get beyond
$\sqrt\varepsilon_M$ and, moreover, slows down near the solution and 
may have problems with stiffness.  Other
variational algorithms (e.g. conjugate gradient, Powell \cite{Brent73}
 or Sobolev gradients
\cite{Neuberger10} ) could be faster and less 
sensitive to stiffness  even if
limited to $\sqrt\varepsilon_M$ precision. Of course, one can combine both
methods and obtain convergent methods up to machine epsilon: Gradient
like methods at the beginning but switching to fast Newton's method for the end
game.

The goal of this paper is not only to obtain numerical computations 
but also to estimate all the sources of error and to obtain computer
assisted proofs of the existence of the numerical orbits obtained 
and some of their properties. 

There has been other computer assisted proofs of invariant objects of the
Kuramoto-Sivashinky equation.  In \cite{ArioliKoch2, Piotr1, Piotr2} the
authors prove the existence of stationary solutions and their bifurcation
diagrams, and in \cite{ArioliKoch1, Piotr3} they prove the existence of
periodic orbits. The proof is done there by  combining rigorous propagation of
the (semi-) flow defined by the PDE and a fixed point theorem in a suitable
Poincar\'e section. 

In this paper, 
the flow property is not used: the existence  of periodic
orbits is reduced to a smooth functional
defined in a Banach space. This methodology could be used for the proof of the
existence of periodic orbits in other type of PDEs 
See \cite{ks_theoretical} for a systematic study. 
 Remarkably, in \cite{CGL_Boussinesq}
the methodology has been extended to validate numerical 
periodic solutions of 
\begin{equation}\label{boussinesq}
\partial_{tt} u =   \left(\nu \partial_x^4+\partial_x^2\right)
u+\frac12 \partial_{xx}\left(u^2\right) \quad \mu > 0 
\end{equation}
with periodic boundary conditions. It is to 
be noted that \eqref{boussinesq} which does  not define a flow, so 
that the methods of finding periodic solutions based on propagating, 
cannot get started. 
Rigorous a-posteriori theorems of existence of quasi-periodic solutions in \eqref{boussinesq}
are in 
\cite{LlaveS16}.

An independent implementation of the  methodology in \cite{ks_theoretical} to 
the Kuramoto-Shivashisly is in \cite{ks_jp_marcio}.  The papers
\cite{ks_jp_marcio} and this one, even if they share 
a common philosophy (explained in \cite{ks_theoretical} )
 differ in several aspects: the spaces of functions considered, using different 
results to control the errors of numerical. The paper \cite{ks_jp_marcio} 
also considered branching of the continuations. 

\paragraph{Organization of the paper}

In Section \ref{section: invariance equation} we present the invariance
equation for the periodic orbits. Then, in Section \ref{section: Newton scheme}
we develop the numerical scheme for the computation of these orbits.  The
methodology for the validation of the the periodic orbits is presented in
Sections \ref{section: a posteriori theorems} and \ref{section: implementation
theorem}.  In Section \ref{section: a posteriori theorems} we present a theorem
that leads to the validation of the periodic orbits, and in Section
\ref{section: implementation theorem} we deduce a rigorous numerical scheme for
the verification of the existence of periodic orbits.  Later, in Section
\ref{section: numerical examples}, several examples of the numerical and the
rigorous schemes are described.  In Appendix \ref{section: appendix} we define
the functional spaces and the properties used during the computer assisted
proofs. In Appendix \ref{section: multiplication} we present a fast algorithm
due to \cite{Rump_matrices_0} for multiplying high dimensional interval
matrices. This algorithm is used for the application of the rigorous 
numerical scheme.

\subsection{Non-rigorous exploration: Period-doubling cascades}
\label{section: num explor}

In this heuristic chapter, 
we  use the remarkable fact the Kuramoto-Sivashinsky equation \eqref{eq: ks
equation} has period-doubling cascades as a source for periodic orbits 
that later we will validate rigorously.  

Computing nonrigorously attracting periodic orbits and period-doubling cascades
is easy: it just requires to integrate forward in time a random (but
well-selected) initial condition until it gets close to the attracting orbit.
Let's give a brief description of the method.  More details can be found in
\cite{Papageorgiou_Smyrlis_91, LanCvitanovic2008}.

Given an initial condition 
\begin{equation*}
u(0, x) = \sum_{k=1}^\infty a_k(0)\sin(kx),
\end{equation*}
it is easy to see that its Fourier coefficients evolve via the (infinite
dimensional) system of 
differential equations  
\begin{equation}
\label{eq: infinite ode}
\dot{a}_k = \left(k^2-\nu k^4\right)a_k+\frac{k}{2}
\left(\sum_{l=1}^\infty a_{k+l}a_k-\frac12\sum_{l+m=k}a_la_m\right)
.
\end{equation}
After truncating the system \eqref{eq: infinite ode}, we get a finite
dimensional ODE: Since it is rather stiff, we should be careful with the ODE
solver we choose.  Numerical tests show that Runge-Kutta 4-5 is enough for our
purposes. Hence, after fixing a value of the parameter $\nu$ and starting with
the initial point $u(0, x)=\sin(x)$, we integrate it forwards in time and,
after a transient time, obtain a good approximation of the periodic orbit.  In
b) to d) in Figure \ref{figure: sections po cascade} we can see the $a_1-a_2$
coordinates of some periodic orbits for different values of the parameter
$\nu$. The period-doubling cascade can be visualized by plotting the local
minima in time of the $L^2-$energy
\begin{equation*}
\text{Energy}(t) = \sqrt{\sum_{k=1}^\infty a_k(t)^2},
\end{equation*}
along the periodic orbit, see a) in Figure \ref{figure: sections po
cascade}.

\begin{figure}
\begin{center}
\resizebox{120mm}{!}{
\begin{tabular}{cc}
\resizebox{60mm}{!}{\includegraphics[angle=270]{./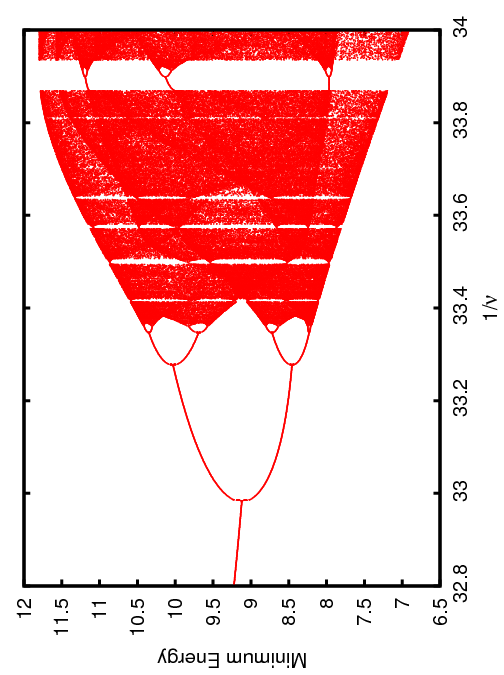}}&
\resizebox{60mm}{!}
{\includegraphics[type=png,ext=.png,read=.png,angle=270]
{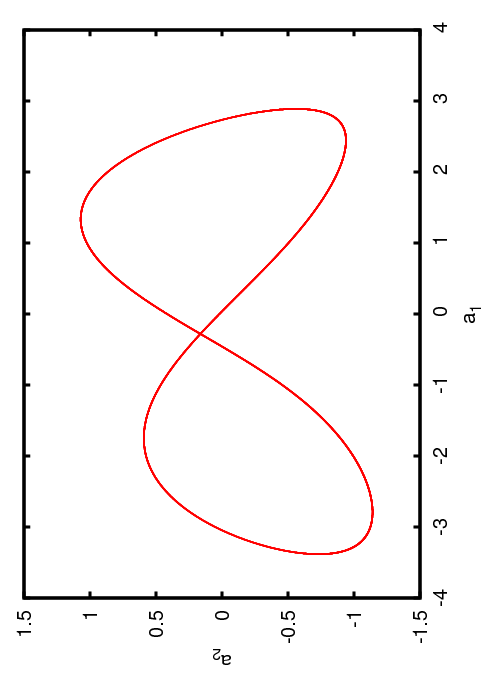}} \\
a) Period doubling cascade &
b) {$1/\nu = 33.2701$} \\
\resizebox{60mm}{!}{\includegraphics[type=png,ext=.png,read=.png,angle=270]
{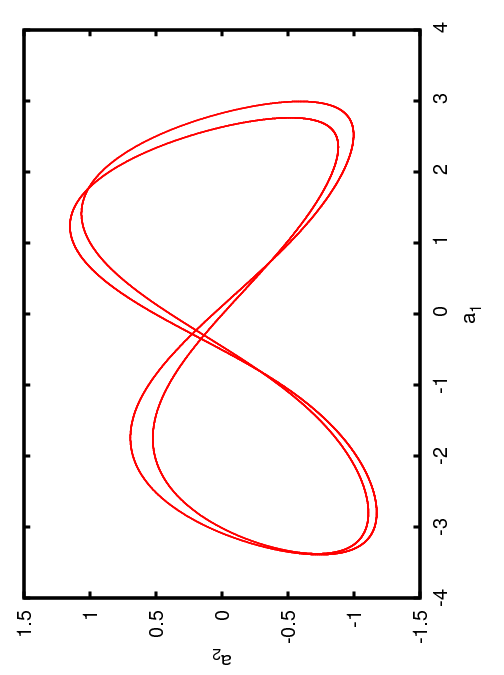}} &
\resizebox{60mm}{!}{\includegraphics[type=png,ext=.png,read=.png,angle=270]
{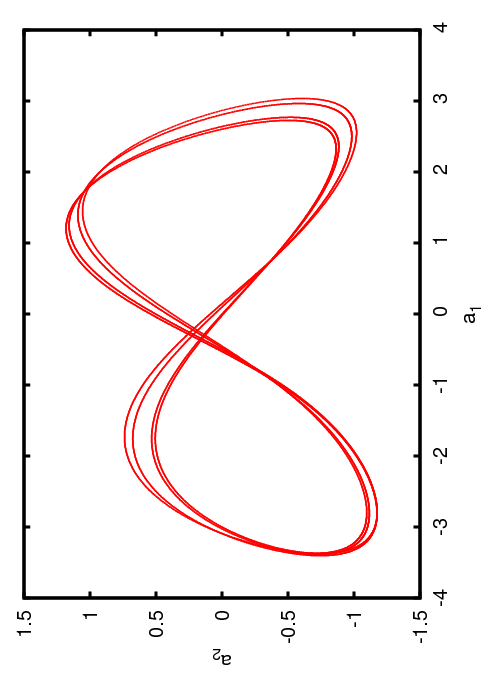}} \\
c) {$1/\nu = 33.3353$} &
d) {$1/\nu = 33.3569$} \\
\end{tabular}
} \caption{Figure a) shows a period-doubling bifurcation cascade. Figures b) to
d) show the projection on the $a_1-a_2$ coordinates of some periodic orbits of
the first three period-doublings.}\label{figure: sections po cascade}
\end{center}
\end{figure}

\begin{rem} 
The period doubling cascades described above  have, to the limit of numerical
precision, the same quantitative properties than the one-dimensional ones found
in \cite{Feigenbaum78,TresserC78} even if the K-S, in principle, is an infinite
dimensional dynamical system. Nevertheless, since the system admits an inertial
manifold it is plausible that the arguments of \cite{ColletEK81} apply.
\end{rem}

The papers \cite{LanChandreCvitanovic2006,
LanCvitanovic2008,CvitanovicDavidchackSiminos2010} present a very remarkable
explicit surface of section which allows to reduce approximately to a one
dimensional map. Other sources of periodic orbits can be found as a byproduct
of computations of inertial manifolds \cite{Jolly_Kevrekidis_Titi_90,
GarciaNT98, NovoTW01, JollyRT00, DCCST2016}.  In this paper we will not use
these methods, but the periodic orbits found by them could be validated using
the methods here.

\section{Derivation of the invariance equation for the 
periodic orbits.}\label{section: invariance equation}

Here we derive a functional equation for the periodic orbits of the
Kuramoto-Sivashinsky equation. This functional equation
 is well suited for applying a
fixed point problem for a well-defined operator. Later on, with this equation,
we develop a numerical scheme for the computation of these orbits and an a
posteriori verification method.

Periodic orbits with period $T$ of the Kuramoto-Sivashinsky equation \eqref{eq:
ks equation} satisfy, under the time rescaling  $\theta= \dfrac{2\pi t}{T}$,
the invariance equation 
\begin{equation}\label{eq: invariance}
f \partial_\theta u + L
u+\frac12 \partial_x\left(u^2\right) = 0,
\end{equation}
where $L = \nu \partial_x^4+\partial_x^2$ and $f = \frac{2\pi}{T}$. A solution
of Equation \eqref{eq: invariance} is represented by a pair $(f, u)$, where $f$
is a real number and $u:\mathbb T^2\rightarrow \mathbb R$, 
$u(\theta,x)$ is odd with respect $x$.

Given an approximate solution $(f_0, u_0)$ of Equation \eqref{eq: invariance},
we look for a correction $(\sigma, \delta )$ of it. This correction satisfies
the equation
\begin{equation}\label{eq: unbounded equation}
f_0\partial_\theta \delta+L \delta+\partial_x\left(u_0\cdot \delta\right)+
\sigma\partial_\theta u_0 = -e-\frac12\partial_x(\delta^2)
-\sigma\partial_\theta\delta,
\end{equation}
where $e = f_0\partial_\theta u_0+L u_0 +\frac12\partial_x(u_0^2)$ is the error
of the approximation $(f_0, u_0)$.

Equation \eqref{eq: unbounded equation} has two problems:

\begin{enumerate}
\item Solutions of Equation \eqref{eq: invariance} are non-unique: If $(\sigma,
\delta(\theta, x))$ is a solution, then so is $(\sigma, \delta(\theta+a, x))$,
$\forall a\in\mathbb{R}.$

\item The linear part of Equation \eqref{eq: unbounded equation} is an
unbounded operator. This leads to numerical instabilites.

\end{enumerate}

To fix the non-uniqueness problem, we impose another equation so that 
Problem \eqref{eq: unbounded equation} has a unique solution. 

To motivate the choice of normalization, we observe that if 
$u(\theta, x)$ is a solution of Equation \eqref{eq: invariance} satisfying  
\begin{equation}
\label{eq:uniqueness1}
\int_{\mathbb{T}^2} u \cdot \partial_\theta u  = C \text{ (Constant)}.
\end{equation}
Then, a translation in the $\theta$ direction -- the source of 
non-uniqueness changes the quantity \eqref{eq:uniqueness1} by 
\begin{equation*}
\int_{\mathbb{T}^2} u(\theta+a, x) 
\cdot \partial_\theta u(\theta, x)  
\simeq\int_{\mathbb{T}^2} (u(\theta, x)+\partial_\theta u(\theta, x) a) 
\cdot \partial_\theta u(\theta, x) 
= C + a\|\partial_\theta u\|^2_{L^2(\mathbb{T}^2)}. 
\end{equation*}
Thus, 
\begin{equation*}
\int_{\mathbb{T}^2} u(\theta+a, x) 
\cdot \partial_\theta u(\theta, x)=
C+a\|\partial_\theta u\|^2_{L^2(\mathbb{T}^2)}+O(a^2).
\end{equation*}
The above calculation can be interpreted geometrically saying that the 
surface in function space given by  \eqref{eq:uniqueness1} is 
transversal to the symmetries of the equation. 

Therefore, we impose local uniqueness for Equation \eqref{eq: unbounded
equation} by requiring that the correction $\delta$ should be
\textit{perpendicular} to the approximate parameterization $u_0$. That is,  
\begin{equation*}
\int_{\mathbb{T}^2} \delta \cdot \partial_\theta u_0  = 0.
\end{equation*}

As we observed before, the linear operator $f_0\partial_\theta+L$ is unbounded,
but we can transform  Equation \eqref{eq: unbounded equation} into 
a smooth equation  by performing algebraic manipulations.   Let
$c\in\mathbb{R}$ be such that $S_c = f_0\partial_\theta+L+c \Id$ is invertible.
Then, we have that $(\sigma, \delta)$ in Equation \eqref{eq: unbounded equation}
satisfies the equation

\begin{equation*}
A
\begin{pmatrix}
\sigma\\
\delta
\end{pmatrix}
=
\tilde e
+
\tilde N
\begin{pmatrix}
\sigma\\
\delta
\end{pmatrix}
,
\end{equation*}
where 
\begin{equation}\label{eq: operator A}
A = 
\begin{pmatrix}
0 & \int_{\mathbb{T}^2}\cdot \partial_\theta u_0\\
S_c^{-1} \partial_\theta u_0 & \Id-cS_c^{-1}+S_c^{-1}\partial_x(u_0\cdot)\\
\end{pmatrix}
,
\end{equation}
\begin{equation*}
\tilde e = 
\begin{pmatrix}
0 \\
-S_c^{-1} e
\end{pmatrix}
,
\end{equation*}
and
\begin{equation*}
\tilde N
\begin{pmatrix}
\sigma\\
\delta
\end{pmatrix}
=
-S_c^{-1}\left(\frac12 \partial_x(\delta^2)+\sigma\partial_\theta
\delta\right)
.
\end{equation*}

\subsection{Algorithm for computing periodic orbits.}\label{section: Newton
scheme}

From the discussion in Section \ref{section: invariance equation}, we have that
our solution $z=(\sigma, \delta)$ satisfies a functional equation of the form
\begin{equation}\label{eq: smooth equation}
A z = \tilde e+\tilde N(z,z), 
\end{equation}
where $A$, given by Equation \eqref{eq: operator A}, is a bounded linear
operator and $\tilde N$ is the nonlinear part ($N(0)=DN(0)=0$). 

The Newton scheme is based on solving Equation \eqref{eq: smooth equation}
numerically. Given an initial guess $(f_0, u_0)$, we update it by finding the
correction $z = (\sigma, \delta)$ that is a solution of the linear equation
\begin{equation}\label{eq: smooth linear system}
A z = \tilde e, 
\end{equation}
and obtain $(f_1, u_1)=(f_0+\sigma, u_0+\delta)$.  This process is repeated
several times until a stopping criterion, $\|\tilde e(f_k, u_k)\| < tol$, is
fullfilled. As in all Newton's methods, if $(f_k, u_k)$ is an approximate
solution, then at each step the error decreases quadratically,  $\|\tilde
e(f_{k+1}, u_{k+1})\|\approx\|\tilde e(f_k, u_k)\|^2$.  Since the problem is
infinite dimensional, truncation to the most significatives Fourier modes is
required.  This transforms the problem to a finite dimensional one.

Summarizing, we obtain the following algorithm:

\begin{algorithm}{\ }
\begin{itemize}
\item[\textbf{Input}] 
\begin{itemize}
\item An approximate solution $(f_0, u_0)$ of the 
invariance equation \eqref{eq: invariance}.
\item The accuracy $\text{\bf tol}$ for the computation 
of the solution.
This gives an upper bound of the accuracy of the outputs of the algorithm. 
\end{itemize}
\item[\textbf{Output}] 
An approximate solution $(f_k, u_k)$ of the invariance equation 
with tolerance less than $\text{\bf tol}$.
\item[0.a)] Fix a norm
on the space of periodic functions on the torus (see Appendix \ref{section:
appendix} for examples of such norms). 
\item[0.b)] 
Set $k=0$.
\item[1)] Compute the error 
$\tilde e_k = -S_c^{-1}\left(f_k\partial_\theta u_k+L u_k 
+\frac12\partial_x(u_k^2)\right)$.
\item[2)] If $\|\tilde e_k\| < \text{\bf tol}$ stop the algorithm. 
The pair $(f_k, u_k)$ is the approximation of the frequency and the periodic
orbit with the desired accuracy.
\item[3)] Solve the (finite dimensional truncated) linear 
system \eqref{eq: smooth linear system} by means 
of a linear solver, obtaining the solution pair $(\sigma, \delta)$.
\item[4)] Set $u_{k+1}=u_k+\delta_k$, $f_{k+1}=f_k+\sigma_k$, and 
update $k$ with $k+1$.
\item[5)] If $\|(\sigma, \delta) \| < \text{\bf tol}$, stop the algorithm. 
The pair $(f_{k+1}, u_{k+1})$ is
the approximation of the periodic orbit and its frequency with 
the desired accuracy.
Otherwise, repeat the process starting from step 1).

\end{itemize}
\end{algorithm}

\begin{rem}
As said before, all computations are performed by representing all Fourier
series as Fourier polynomials of order, say, $N$. However, we notice that in
Step 1), where the error $\tilde e_k$ is computed, the computation of $u^2$ is
required, hence, when we apply the functional to 
a polynomial of degree $N$, we obtain a polynomial of 
degree $2N$.  We have observed in our numerical tests that a way to 
obtain sharp estimates is to compute the functional with $2N$ coefficients. 
By doing so the bounds  obtained by the algorithm are
very sharp and suitable for the validation scheme presented in Section
\ref{section: implementation theorem}.
\end{rem}

\subsection{Computation of the stability of a periodic 
orbit.}\label{subsection: computation stability}

Once a periodic orbit $(f, u)$ is computed one often desires to compute its
stability (the dimension of the unstable manifold). One way of computing it is
counting the number of eigenvalues of the Floquet operator that are outside the
unit circle.  That is, integrate the linear differential equation
\begin{equation*}
f \partial_\theta v = -Lv- \partial_x(u\cdot v)
\end{equation*}
with initial condition $v_0 = \Id$, up to time $1$, and compute the spectrum of
$v_1$. Then, check how many eigenvalues are outside the unit disk. Of course,
this should be done by truncating all computations in finite dimensions and
bounding the errors.

Another way is computing the spectrum of the unbounded (but closed) linear
operator 
\begin{equation}\label{eq: stability operator}
f \partial_\theta +L+\partial_x(u\cdot )
.
\end{equation}
Given an eigenvalue $\lambda$ of the Floquet operator, $\log(\lambda)+i f\cdot
n$, $n\in\mathbb{Z}$, is an eigenvalue of the operator \eqref{eq: stability
operator}. Hence, restricting the spectrum on a set of the form
$\Gamma_a=\{z\in\mathbb{C} : a\leq \text{Im}(z) < a+f\}$ is in one-to-one
correspondence with the spectrum of the Floquet operator. In particular,
computing the dimension of the unstable manifold is the same as computing the
number of eigenvalues of the operator \eqref{eq: stability operator} restricted
to the left half-plane $\Gamma_a\bigcap \{z: \text{Im}(z) < 0\}$.

Even if the two methods are equivalent for the equations that define a 
differentiable flow, we note that the  method based on studying the 
spectrum of \eqref{eq: stability operator} makes sense even in equations 
that do not define a flow. Hence, this is the method that we will use. 

\section{An a posteriori theorem for the rigorous verification 
of the existence of periodic orbits.}\label{section: a posteriori theorems}

In this section we present an a posteriori result, 
Theorem~\ref{thm: contraction 2} that, given an approximate
solution $(f, u)$ of Equation \eqref{eq: invariance} satifying 
some explicit quantitative assumptions, ensures the existence of 
a true solution  of 
\eqref{eq: invariance} and estimates the 
distance between this true solution and the 
approximate one.  Of course, the solutions of 
\eqref{eq: invariance} give periodic solutions of the 
evolution equation.
Theorem~\ref{thm: contraction 2} is a tailored version of Theorem 2.3
apprearing in \cite{ks_theoretical}.  For the sake of completeness, we will
state it here adapted to Equation \eqref{eq: smooth equation}.
Note that the theorem is basically an elementary contraction mapping 
principle, but that we allow for the application of a preconditioner, which 
makes it more applicable in practice. 

\begin{thm}
\label{thm: contraction 2}
Consider the operator  
\begin{equation}
\label{eq: fixed point}
F(z)=A z - \tilde e-N_{\bu}(z), 
\end{equation}
defined in $\overline{B_{\bu}(\rho)} = \{ u : \|u-\bu\| \le \rho
\}$, with $\rho > 0$, and where  
$N_{\bu}$ the nonlinear part of the operator $F$  at the point $\bu$, that is
\[
N_{\bu}(z)=F(\bu+z)-F(\bu)-DF(\bu)z.
\]
Let $B$ be a linear operator such that $BDF(\bu)$, $BF$
and $BN_{\bu}$ are continuous operators.  If, for some $b, K > 0$ we have:

\begin{enumerate}[label=(\alph*),ref=(\alph*)]

\item
\label{item: 1} 
$\|I-BDF(\bu)\| = \alpha < 1$.

\item 
\label{item: 2} 
$\|B\left(F(\bu)+N_{\bu}(z)\right)\|\leq b$ whenever $\|z\| \leq \rho$.

\item 
\label{item: 3} 
$\text{Lip}_{\|z\| \leq \rho} B N_{\bu}(z) < K$

\item 
\label{item: 4} 
$\frac{b}{1-\alpha} < \rho$.

\item 
\label{item: 5} 
$\frac{K}{1-\alpha} < 1$. 
\end{enumerate}

then there exists $\delta u$ such that $\bu+\delta u$
is in $\overline{B_{\bu}(\rho)}$ and  
is a unique solution of Equation 
\eqref{eq: fixed point}, 
with $\|\delta u\| \leq \frac{\|B F(\bu)\|}{1-\alpha-K}$.

\end{thm}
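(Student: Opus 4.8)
The theorem is a Newton–Kantorovich–style contraction result, so my plan is to convert the fixed-point problem $F(z)=0$ into the problem of finding a genuine fixed point of an auxiliary map and then apply the Banach contraction principle. The natural auxiliary map, given the preconditioner $B$, is
\begin{equation*}
T(z) = z - B F(\bu + z),
\end{equation*}
whose fixed points $z = \delta u$ are exactly the zeros of $BF(\bu+\cdot)$; since $B$ is the preconditioner one expects invertible (or at least such that $BF$ vanishing forces $F$ vanishing), these correspond to solutions of \eqref{eq: fixed point}. First I would rewrite $F(\bu+z)=F(\bu)+DF(\bu)z+N_{\bu}(z)$ using the very definition of $N_{\bu}$, so that
\begin{equation*}
T(z) = z - BDF(\bu)z - B\bigl(F(\bu)+N_{\bu}(z)\bigr) = (I-BDF(\bu))z - B\bigl(F(\bu)+N_{\bu}(z)\bigr).
\end{equation*}
This decomposition is the crux: it exposes $\alpha=\|I-BDF(\bu)\|$ from hypothesis \ref{item: 1} as the contraction factor coming from the linear part, and isolates the inhomogeneous/nonlinear term controlled by \ref{item: 2} and \ref{item: 3}.

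Next I would verify the two hypotheses of the contraction mapping theorem on the closed ball $\overline{B_{\bu}(\rho)}$, viewed through the variable $z$ with $\|z\|\le\rho$. For the self-mapping property I would estimate $\|T(z)\|$ by the triangle inequality, bounding the linear part by $\alpha\|z\|\le\alpha\rho$ and the remaining part by $b$ via \ref{item: 2}, giving $\|T(z)\|\le \alpha\rho + b$; hypothesis \ref{item: 4} rewritten as $b<(1-\alpha)\rho$, i.e. $\alpha\rho+b<\rho$, then shows $T$ maps the ball into itself. For the contraction property I would estimate $\|T(z_1)-T(z_2)\|$: the linear piece contributes $\alpha\|z_1-z_2\|$, and the nonlinear piece contributes at most $K\|z_1-z_2\|$ by the Lipschitz bound \ref{item: 3}, so the total Lipschitz constant of $T$ is at most $\alpha+K$, which is strictly less than $1$ precisely by hypothesis \ref{item: 5} (since $K/(1-\alpha)<1$ is equivalent to $\alpha+K<1$). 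The Banach fixed-point theorem then yields a unique $\delta u$ with $\|\delta u\|\le\rho$.

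Finally I would sharpen the size estimate to the claimed bound $\|\delta u\|\le \|BF(\bu)\|/(1-\alpha-K)$. The standard way is to use the fixed-point relation $\delta u = T(\delta u)$ together with $T(0) = -BF(\bu)$, so that
\begin{equation*}
\|\delta u\| = \|T(\delta u)\| \le \|T(\delta u)-T(0)\| + \|T(0)\| \le (\alpha+K)\|\delta u\| + \|BF(\bu)\|,
\end{equation*}
and solving for $\|\delta u\|$ gives the stated bound after dividing by $1-\alpha-K>0$. The main obstacle I anticipate is conceptual rather than computational: one must be careful that the Lipschitz hypothesis \ref{item: 3} is stated for $BN_{\bu}$ alone while the contraction estimate needs the Lipschitz constant of the full map $T$, so I would check that $N_{\bu}$ absorbs all the higher-order $z$-dependence and that the linear and nonlinear Lipschitz contributions genuinely add to $\alpha+K$. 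Provided the bookkeeping in the $T(z)=(I-BDF(\bu))z-B(F(\bu)+N_{\bu}(z))$ splitting is done consistently, the remaining steps are routine triangle-inequality estimates.
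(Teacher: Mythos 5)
Your contraction argument is the right one, and all the estimates in it are correct: the splitting $T(z)=(I-BDF(\bu))z-B\left(F(\bu)+N_{\bu}(z)\right)$, the self-mapping bound $\alpha\rho+b<\rho$ from hypothesis (d), the contraction constant $\alpha+K<1$ from hypothesis (e), and the a posteriori bound obtained from $T(0)=-BF(\bu)$ are exactly the intended proof. Note that the paper itself does not prove this theorem: it describes it as ``basically an elementary contraction mapping principle'' with a preconditioner and defers to Theorem 2.3 of \cite{ks_theoretical}, so your argument is the one the authors have in mind.

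The one genuine gap is the point you flagged parenthetically and then waved off: your fixed point $\delta u$ satisfies $BF(\bu+\delta u)=0$, while the theorem asserts $F(\bu+\delta u)=0$ (and the paper's Theorem on the Kuramoto-Sivashinsky validation uses precisely a solution of Equation \eqref{eq: smooth equation}, not of its preconditioned version). Injectivity of $B$ does not follow from the stated hypotheses: hypothesis (a) makes $BDF(\bu)$ invertible, which yields surjectivity of $B$ and injectivity of $B$ only on the range of $DF(\bu)$, and there is no reason for $F(\bu+\delta u)$ to lie in that range. In the cited reference this is resolved by making injectivity of the preconditioner an explicit hypothesis; as stated here that hypothesis is missing, so your proof should either assume it or weaken the conclusion to ``unique zero of $BF(\bu+\cdot)$ in the ball.'' It is worth observing that in the paper's actual application the gap closes for free: there $B=\Id+\hat B$ with $\hat B$ of finite rank, so $B$ is Fredholm of index zero, and the surjectivity forced by (a) then implies injectivity. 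Uniqueness, by contrast, is unaffected by this issue, since any zero of $F$ in the ball is a zero of $BF$ and hence coincides with your fixed point.
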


We are now in a position to write down the theorem for the existence and local
uniqueness of periodic orbits and their period  for the Kuramoto-Sivashinsky
equation. This theorem has been written for the special case of the family of
Banach spaces $X_M$, that depends on the parameters $r, s_1, s_2 \geq 0$. 
It is the Banach space of periodic functions 
$u(\theta, x) = \sum_{(k_1, k_2)\in\mathbb{Z}^2} u_{k_1,k_2} 
e^{i (k_1\cdot x+k_2\cdot \theta)}$ with finite norm
\begin{equation*}
\|u\|_{M} = \sum_{(k_1, k_2) \in\mathbb{Z}^2} M(k_1, k_2)|u_{k_1, k_2}|,
\end{equation*}
where 
\[
M(k_1, k_2)=(1+|k_1|)^{s_1}(1+|k_2|)^{s_2} e^{r (|k_1|+|k_2|)}.
\]
When there is no confusion, we will denote the norm by $\|\cdot \|_{M}$.  These
spaces have the property that all their elements are analytic functions for
$r\neq 0$.  See Appendix \ref{section: appendix} for a more detailed
discussion.

\begin{thm}\label{thm: contraction 3}
Let $r, s_1, s_2 \geq 0$ define the Banach space $X_M$, and $(f, u)$ be an
approximate solution of Equation \eqref{eq: invariance}, with error $e$, and
consider Equation \eqref{eq: smooth equation} for the correction $(\sigma,
\delta)$. Let $B=\Id+\hat B$ be a linear operator, and suppose that the
following conditions are satisfied:
\begin{enumerate}
\item[A)] $\|\hat B\hat A+\hat A+\hat B\|_{M} = \alpha < 1$,

\item[B)] $\|B\|_{M}\|\tilde e\|_{M} \leq e_1$,

\item[C)] $\|B\|_{M}
(\|S_c^{-1} \partial_\theta\|_{M}
+
\frac12\|S_c^{-1}\partial_x\|_{M}
)\leq e_2$,

\item[D)] $(1-\alpha)^2-4 e_1 e_2 > 0$,
\end{enumerate}
then there exists a solution $z_*=(\sigma_*, \delta_*)$ of Equation \eqref{eq:
smooth equation} satisfying $\|z_*\|_{M} \leq E=\frac{e_1}{1-\alpha -
\rho_-}$, where $\rho_- = 1-\alpha-\sqrt{(1-\alpha)^2-4e_1e_2}$.
\end{thm}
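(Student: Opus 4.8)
The plan is to deduce Theorem~\ref{thm: contraction 3} from the abstract contraction principle of Theorem~\ref{thm: contraction 2} by making an appropriate choice of the preconditioner $B$, the center point $\bu$, and the radius $\rho$, and then verifying hypotheses \ref{item: 1}--\ref{item: 5} from the hypotheses A)--D) at hand. First I would set the center point to be $\bu = 0$ in the correction variable $z = (\sigma,\delta)$, so that $F(z) = Az - \tilde e - \tilde N(z)$ with $\tilde N$ as in Section~\ref{section: invariance equation}; since $\tilde N(0) = D\tilde N(0) = 0$, the nonlinear remainder $N_{\bu}$ of Theorem~\ref{thm: contraction 2} coincides with $\tilde N$ and $DF(0) = A$. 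I would write $A = \Id + \hat A$ (so $\hat A$ collects the off-diagonal and $S_c^{-1}$ terms of \eqref{eq: operator A}) and $B = \Id + \hat B$, so that $I - BDF(0) = I - BA = -(\hat B\hat A + \hat A + \hat B)$. Hypothesis A) then gives exactly $\|I - BDF(0)\| = \alpha < 1$, which is \ref{item: 1}.

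Next I would identify the constants $b$ and $K$ of Theorem~\ref{thm: contraction 2} in terms of $e_1$ and $e_2$. For the Lipschitz constant \ref{item: 3}, the key point is that $\tilde N$ is quadratic in $z$, so $B\tilde N$ has a Lipschitz constant on the ball $\|z\|\le\rho$ that is linear in $\rho$: estimating $\frac12\partial_x(\delta^2)$ and $\sigma\partial_\theta\delta$ and using hypothesis C) one gets $\text{Lip}_{\|z\|\le\rho} B\tilde N(z) \le 2 e_2 \rho =: K$. For hypothesis \ref{item: 2}, since $F(\bu) = F(0) = -\tilde e$ we have $\|B(F(0) + N_{\bu}(z))\| \le \|B\tilde e\| + \|B\tilde N(z)\| \le e_1 + e_2\rho^2 =: b$, using B) for the first term and the quadratic bound (valued at radius $\rho$) for the second. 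This reduces conditions \ref{item: 4} and \ref{item: 5} to the two scalar inequalities $e_1 + e_2\rho^2 < (1-\alpha)\rho$ and $2e_2\rho < 1-\alpha$.

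The heart of the argument is then to show that hypothesis D), $(1-\alpha)^2 - 4e_1e_2 > 0$, guarantees the existence of an admissible radius $\rho$ satisfying both scalar inequalities simultaneously. I would study the quadratic $q(\rho) = e_2\rho^2 - (1-\alpha)\rho + e_1$; its discriminant is precisely $(1-\alpha)^2 - 4e_1e_2$, so D) is exactly the condition that $q$ has two real roots $\rho_\pm = \frac{(1-\alpha)\pm\sqrt{(1-\alpha)^2-4e_1e_2}}{2e_2}$, between which $q$ is negative. Taking $\rho = \rho_-$ (the smaller root, or any $\rho$ slightly above it in the interval $(\rho_-,\rho_+)$) makes \ref{item: 4} hold, and I would check that at $\rho_-$ one also has $2e_2\rho_- \le 1-\alpha$, so that \ref{item: 5} holds as well; this is where one must verify that the smaller root lies to the left of the vertex $\frac{1-\alpha}{2e_2}$ of the parabola, which follows from $\rho_- < \rho_+$ together with the symmetry of the roots about the vertex. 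Here I would need to reconcile notation: the statement's $\rho_- = 1-\alpha-\sqrt{(1-\alpha)^2-4e_1e_2}$ appears to be the quantity $2e_2\rho_-$ in my scaling, and the claimed bound $E = \frac{e_1}{1-\alpha-\rho_-}$ should be matched against $\frac{\|BF(0)\|}{1-\alpha-K} = \frac{\|B\tilde e\|}{1-\alpha-2e_2\rho}$ from Theorem~\ref{thm: contraction 2}, so a careful bookkeeping step is required to confirm the two expressions agree.

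The main obstacle I anticipate is precisely this last reconciliation: getting the scaling of the quadratic exactly right so that the chosen $\rho$ simultaneously satisfies \ref{item: 4} and \ref{item: 5} with the optimal (root) value, and then showing that Theorem~\ref{thm: contraction 2}'s error bound $\frac{\|BF(\bu)\|}{1-\alpha-K}$ collapses to the stated $E$. Once Theorem~\ref{thm: contraction 2} applies, it yields a unique $\delta u = z_*$ in $\overline{B_0(\rho)}$ solving $F(z) = 0$, which is exactly a solution of \eqref{eq: smooth equation}, and the norm bound $\|z_*\|_M \le E$ follows directly; the translation back to a periodic solution of \eqref{eq: invariance} is then immediate from the derivation in Section~\ref{section: invariance equation}.
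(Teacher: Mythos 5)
Your proposal is correct and follows essentially the same route as the paper's proof: reduce to Theorem~\ref{thm: contraction 2} centered at $z=0$ with $b = e_1 + e_2\rho^2$ and $K = 2e_2\rho$, then choose $\rho$ between the roots of $e_2\rho^2-(1-\alpha)\rho+e_1$ (whose discriminant is exactly hypothesis D)) and evaluate the error bound at the smaller root. Your notational reconciliation is also right: the statement's $\rho_-$ is $2e_2$ times the root radius, so $E = e_1/\bigl(1-\alpha-\rho_-\bigr)$ coincides with $\|BF(0)\|/(1-\alpha-K)$, and the slight issue that condition \ref{item: 4} degenerates to equality at the root itself is handled the same way the paper handles it, by working at radii slightly above the root.
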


\begin{proof}
Let $\rho > 0$ such that $1-\alpha-\sqrt{(1-\alpha)^2-4e_1e_2} < 2e_2\rho <
1-\alpha$.  We need to deduce all the conditions in Theorem \ref{thm:
contraction 2}.  Notice that $z = (\sigma, \delta)$ and $Q(z, z) =
S_c^{-1}(\sigma \partial_\theta\delta+\frac12\partial_x(\delta^2))$.  Condition
1) in Theorem \ref{thm: contraction 2} is the same as condition a) for the
present theorem. 

$b$ in condition b) is $e_1+e_2\rho^2$, because if 
$\|(\sigma, \delta)\|_{M}\leq \rho$,
then 
\begin{equation*}
\begin{split}
& \left\|B\left(\tilde e+S_c^{-1}(\sigma \partial_\theta\delta+
\frac12\partial_x(\delta^2))\right)\right\|_{M}
\leq
\|B\|_{M}\|\tilde e\|_{M}
+\|B S_c^{-1}(\sigma \partial_\theta\delta+\frac12\partial_x(\delta^2))\|_{M}
\\
&\phantom{AAAAA}\leq
e_1
+\|B S_c^{-1}(\sigma \partial_\theta\delta)\|_{M}
+\frac12\|B S_c^{-1}(\partial_x(\delta^2))\|_{M}
\\
&\phantom{AAAAA}\leq e_1+
\|B\|_{M}\left(
\|S_c^{-1}\partial_\theta\|_{M}
|\sigma| \|\delta\|_{M}
+
\frac12\|S_c^{-1}\partial_x\|_{M}
\|\delta\|^2_{M}
\right)
\\
&\phantom{AAAAA}\leq 
e_1+e_2 \rho^2.
\end{split}
\end{equation*}

$K$ in condition c) is $2e_2\rho$ because if $\|z\|_{M}, 
\|\hat z\|_{s_1, s_2}\leq \rho$,
then 
\begin{equation*}
\begin{split}
\|B\left(Q(z, z)-Q(\hat z, \hat z)\right)\|_{M}
\\
\leq \|B\|_{M}\|S_c^{-1} 
\left(\sigma \partial_\theta \delta-\hat\sigma\partial_\theta\hat\delta+
\delta\partial_x\delta
-\hat\delta\partial_x\hat\delta\right)\|_{M}
\\
\phantom{AA}\leq \|B\|_{M}
\left(\|S_c^{-1}(\sigma \partial_\theta (\delta-\hat\delta)+
(\sigma-\hat\sigma)\partial_\theta\hat\delta)\|_{M}
+\|S_c^{-1}(\delta\partial_x(\delta-\hat\delta)+
(\delta-\hat\delta)\partial_x\hat\delta)\|_{M}\right)
\\
\phantom{AA}\leq \|B\|_{M}
\left(
\|S_c^{-1}\partial_\theta\|_{M}
+\frac12\|S_c^{-1}\partial_x\|_{M}\right)
2\rho\|z-\hat z\|_{M}
\\
\phantom{AA}\leq 2e_2\rho\|z-\hat z\|_{M}.
\end{split}
\end{equation*}

Conditions d) and e) of Theorem \ref{thm: contraction 2} are equivalent to 
\begin{equation*}
\frac{2e_2\rho}{1-\alpha} < 1
\text{ \quad and \quad} \frac{e_1+e_2\rho^2}{1-\alpha} < \rho,
\end{equation*}
which are satisfied because 
$1-\alpha-\sqrt{(1-\alpha)^2-4e_1e_2} < 2e_2\rho < 1-\alpha$. 

Finally, the upper bound on the norm on the solution $\|z_*\|_{M}$ is obtained
by applying Theorem \ref{thm: contraction 2} with ${\rho =
\frac{1-\alpha-\sqrt{(1-\alpha)^2-4e_1e_2}}{2e_2}}$. 
\end{proof}

\begin{rem} Notice that, using the radii polynomial approach, 
we obtain that $\alpha, e_2$ are functions of the radius $\rho$. Therefore, we
obtain a range of radii for which Theorem~\ref{thm: contraction 2} applies. Of
course the largest radius is a better result for the uniquess part and the
smallest radius is a better result fof the distance to the initial guess
\end{rem}

\section{Implementation of the rigorous computer assisted validation 
of periodic orbits for the Kuramoto-Sivashinsky 
equation.}\label{section: implementation theorem}

We use Theorem \ref{thm: contraction 3} and construct an implementation of the
computer assisted validation of periodic orbits. Our initial data, $(f, u)$,
will consist of a real number $f$, a trigonometric polynomial $u$ of degrees
$(d_1, d_2)$ in the variables $(\theta, x)$, and the operator $B=\Id+\hat B$, 
where $\hat B$ is a $2d_1\cdot(2d_2+1)\times 2d_1\cdot(2d_2+1)$ dimensional 
matrix (this operator can be obtained by nonrigorous computations 
by approximating the inverse of the operator $A$).

First of all, notice that the constants $e_1$ and $e_2$ in Theorem \ref{thm:
contraction 3} depend on the diagonal operators $S_c^{-1}$, $\partial_\theta$
and $\partial_x$, and on the norms of $B$ and $\tilde e$. The computation of
the norms of the (diagonal) operators $S_c^{-1}\partial_\theta$ and
$S_c^{-1}\partial_x$, is done in Appendix \ref{section: appendix}, lemma
\ref{lem: banach2}. 

Secondly, the computation of the norms of the operator $B$ and the error
$\tilde e$ can be done with the help of computer assisted techniques because
they are finite dimensional: $\tilde e$ is a trigonometric polynomial of
dimension $2d_1\cdot (2d_2+1)$ and $B=\Id +\hat B$ implies that $\|B\|_{M}\leq
1 +\|\hat B\|_{M}$ ( Note that $\| \hat B\|_M$ is the  norm of a finite dimensional matrix).

Finally, it remains to show how to compute operator norm of 
\begin{equation}
\label{eq: defect matrices}
\|\hat B\hat A+\hat A+\hat B\|_M.
\end{equation}
Since $u$ is a trigonometric polynomial, the operator $\hat A$ is a band 
operator: $\hat A_{i, j}=0$ for $|i| > d_1$ or $|j| > 2d_2+1$. Hence 
$\hat A$ decomposes as the sum of a finite matrix $\hat A_{F}$ of dimensions 
$2d_1\cdot(2d_2+1)\times 2d_1\cdot(2d_2+1)$ and a linear operator $\hat A_I$.
This operator $\hat A_I$ is:
\begin{equation}
\label{eq: high terms operator}
\hat A_I = \mathbb{P}_{(> d_1, > d_2)}\left(-cS_c^{-1}+S_c^{-1}\partial_x(u\cdot)\right)
=-c\mathbb{P}_{(> d_1, > d_2)}S_c^{-1} \mathbb{P}_{(> d_1, > d_2)}+
\mathbb{P}_{(> d_1, > d_2)}S_c^{-1}\partial_x \mathbb{P}_{(> d_1, > d_2)}
(u\cdot)
,
\end{equation}
where $\mathbb{P}_{(\leq d_1, \leq d_2)}$ is the projection operator on the
$d_1\cdot (d_2+1)$-dimensional vector space spanned by the low frequencies
and $\mathbb{P}_{(> d_1, >d_2)} = \Id-\mathbb{P}_{(\leq d_1, \leq d_2)}$.

Hence, 
\begin{equation}
\label{eq: bound operator}
\|\hat B\hat A+\hat A+\hat B\|_M\leq \max\left\{\|\hat B\hat A_{F}+\hat 
A_{F}+\hat B\|_M,
\|\Id+\hat B\|_M\|\hat A_{I}\|_M
\right\}
.
\end{equation}

\begin{rem}
Notation $\mathbb{P}_{(> d_1, >d_2)}$ could be a little bit 
misleading:
It does not mean that it is the projection operator on the high 
frequencies for both variables, but the complementary of the low 
frequencies projection operator.
\end{rem}

The norm $\|\hat B\hat A_{F}+\hat A_{F}+\hat B\|_M$ appearing in the upper
bound \eqref{eq: bound operator} can be estimated with the help of computer
assisted techniques, while the norm $\|\Id+\hat B\|_M\|\hat A_{I}\|_M$ is
split  into  the computation of $\|\Id+\hat B\|_M$ and $\|\hat A_{I}\|_M$. The
former is done as said before, while the latter (the bound of the operator
\eqref{eq: high terms operator}) is bounded above by:
\begin{equation}\label{eq: norm tail}
cK_1+K_2K_3,
\end{equation}
where 
$K_1=\|\mathbb{P}_{(> d_1, > d_2)}S_c^{-1} 
\mathbb{P}_{(> d_1, > d_2)}\|_M$, 
$K_2 = \|\mathbb{P}_{(> d_1, > d_2)}S_c^{-1}\partial_x 
\mathbb{P}_{(> d_1, > d_2)}\|_M$
and $K_3 = \|u\|_M$.
Fixing $c=\frac1\nu$ and with the help of Lemma \ref{lem: banach1} 
we obtain that 

\begin{tabular}{l}
$K_1 = 
\sqrt2\max\left\{\max_{x > d_1}
\left\{\dfrac1{p(x)}\right\}, \dfrac1{f(d_2+1)}\right\},$
\\
$K_2 = 
\sqrt2 \left(\frac{4}{3\nu}\right)^{\frac14}
\left(\max\left\{\max_{x > d_1}
\left\{\dfrac1{p(x)}\right\}, \dfrac1{f(d_2+1)}\right\}\right)^{\frac34},$
\\
$K_3 = \sup_{(i_1, i_2)\in\mathbb Z^2} |u_{i_1, i_2}|M(i_1, i_2).$
\end{tabular}

\begin{rem}
Since $u$ is a trigonometric polynomial, $K_3$ is in fact computed by
\[
\sup_{(i_1, i_2)\in [0, 2d_1+1]\times [1, 2d_2+1]} |u_{i_1, i_2}|M(i_1, i_2).
\]
\end{rem}

\begin{rem}
The upper bound given in lemma \ref{lem: banach1} tends to zero as 
the number of modes used in the discretization tends to infinity. This assures 
us that this methodology is reliable. 
\end{rem}

\begin{rem}
The computation of the norm \eqref{eq: defect matrices} is very demaning in
terms of computer power effort. Fortunately, not very sharp 
results  are needed. Provided that we can prove that the norm is 
less than $1$, we obtain a contraction. The final result is not 
too afected by the contraction factor. 
On the other hand, the bound on the error $e_1$ in Theorem \ref{thm: contraction
3} has a very direct influence in the error established.

Hence, a good strategy is  to perform the matrix computations with the
lowest dimensions possible and perform 
the estimate of $e_1$ with the highest possible number  
of modes. This relies on the fact that given two functions $u_0$ and $u_1$ with
$\|u_0-u_1\|_M\leq \delta$, then their associated $\hat A_{u_i}$ satisfy that
$\|\hat A_{u_0}-\hat A_{u_1}\|_M\leq \|S_c^{-1}\partial_x\|_M \delta\leq K_2
\delta$. This strategy is reflected in Algorithm
\ref{algor: computation periodic orbit}.

One should also realize that the calculation of the operator $B$ 
does not need to be justified. Some further heuristic approximations 
that reduce the computational effort could be taken (e.g. a Krylov 
method that gives a finite rank approximation). We have not taken advantage of this possibilities since 
they were not needed in our case. 

Finally, we note that since the preconditioner is not so crucial, 
and it is more expensive to compute, in continuation algorithms, 
it could be good to update it less frequently than the residual. 
\end{rem}

Now in a position of giving the algorithm
for the validation of the existence and local uniqueness of periodic orbits
near a given approximate one $(f, u)$. We suppose that the approximation is
obtained by the methods explained in Section \ref{section: Newton scheme}.

\begin{rem}
For more details on the computer implementation of this algorithm 
(e. g. the rigorous manipulation of Fourier series), we refer to the appendix in 
\cite{FiguerasHaro_CAP} or \cite{Haro_Survey}.
\end{rem}

\begin{algorithm}{\ }\label{algor: computation periodic orbit}
\begin{itemize}
\item[\textbf{Input}]
\begin{itemize}
\item $r, s_1, s_2 \geq 0$, defining  the Banach space $X_{M}$.
\item An approximate solution 
$(f, u)$ to Equation \eqref{eq: invariance}, of dimensions
$d_1, d_2$ in the variables $t, x$.
\item A pair of natural numbers $\tilde d_1, \tilde d_2$ such that 
$\tilde d_i \leq d_i$, $i=1,2$.
\end{itemize}
\item[\textbf{Output}] If succeeded, the existence of a constant 
$\rho_- > 0$ where a (unique) solution of the invariance equation 
exists inside the ball centered at $(f, u)$ with radius $\rho_-$.

\item[1)] Compute the trigonometric polynomial $\tilde u$ by 
truncating $u$ up to $\tilde d_1, \tilde d_2$.

\item[2)] Compute an upperbound $\delta$ of $\|\tilde u-u\|_M$.

\item[3)] Compute the matrix $\hat A_{F}$ and the matrix $\hat B$ 
associated to $\tilde u$. 

\item[4)] Compute an upper bound, $\alpha_1$, of 
$\|\hat B\hat A_{F}+\hat B + \hat A_F \|_{M}.$

\item[5)] Compute upper bounds of the constants $K_1, K_2$ and $K_3$.

\item[6)] Compute an upper bound, $\alpha_2$, of $cK_1+K_2K_3$. 

\item[7)] Compute an upper bound, $b$, of $1+\|\hat B\|_{M}$.

\item[8)] Compute $\alpha = \max\{\alpha_1, \alpha_2\}+K_2 \delta b$. If
$\alpha$ is greater than $1$ then the algorithm stops and the result is that
the validation has failed, otherwise continue with Step 7).

\item[9)] Compute an upper bound, $e_0$, of $\|\tilde e\|_{M}$.

\item[10)] Compute an upper bound, $e_1$, of $b\cdot e_0$.
 
\item[11)] Compute an upper bound, $e_2$, of 
$b\cdot(\|S_c^{-1}\partial_\theta\|_M+\frac12 \|S_c^{-1}\partial_x\|_M)$.

\item[12)] Check if $(1-\alpha)^2-4e_1e_2 > 0$.
\end{itemize}
If the inequality in 12)  is true 
then, by Theorem~\ref{thm: contraction 3}, 
there exists a unique periodic orbit $(f_*, u_*)$ 
such that 
$\|(f_*-f, u_*-u)\|_{M}\leq E=\dfrac{e_1}{1-\alpha-\rho_-}$, where 
$\rho_-$ has the expression as in Theorem \ref{thm: contraction 3}.

\end{algorithm}

\begin{rem}
The computation of the product of the interval matrices $\hat B$ and $\hat A_F$
is the bottleneck, in terms of computational time, of the algorithm: naive
multiplication of the matrices leads to disastrous speed results. To speed up
this we use the techniques in \cite{Rump_matrices_0, Rump_matrices}, which
describe algorithms for the rigorous computation of product of interval
matrices with the help of the \verb$BLAS$ package. See Appendix \ref{section:
multiplication} for a presentation of this technique.
\end{rem}

\subsection{Improving the radius of analyticity of solutions}
\label{subsection: improving}

A simple strategy for giving rigorous lower bounds of the analyticity radius of
the solutions is by first performing Algorithm \ref{algor: computation periodic
orbit} with $r\approx 0$. Then, apply a posteriori bounds for improving the
value of $r$. (See  \cite{Hungria_Lessard_Mireles-James_2016} for an
application of this technique in the context of ODEs).

Denote by $\alpha_r$, $e_{1, r},$ and $e_{2, r}$ the upperbounds appearing in
Theorem \ref{thm: contraction 3} when computed with the one-parametric norm
$\|\cdot \|_{M_r}$ (the weight $M_r$ depends on the radius of analyticity).
Moreover, notice that for any trigonometric polynomial $u$ of dimensions
$d_1\times d_2$ and for any $\hat r > 0$ we have $\|u\|_{M_{\hat r}}\leq
\|u\|_{M_0}e^{\hat r d_1 d_2}$ and for any finite dimensional $A$ of dimensions
$d\times d$ $\|T\|_{M_{\hat r}}\leq \|T\|_{M_0}e^{\hat r d}$.  Hence, since the
application of Theorem \ref{thm: contraction 3} is performed with finite
dimensional approximations we obtain that $\alpha_{\hat r}\leq \alpha_0 e^{\hat
r d_1 d_2}$, $e_{1,\hat r}\leq e_{1,0}e^{2\hat r d_1 d_2}$ and $e_{2, \hat
r}\leq e_{2, 0} e^{\hat r d_1 d_2}$. So, by imposing that these upperbounds
satisfy the conditions appearing in Theorem \ref{thm: contraction 3} we obtain
larger values of the radius of analyticity of the solutions.

Of course, some more detailed results could be obtained by repeating the 
calculation of the norms in the spaces of analytic spaces closer to 
the true value. Of course, this will require reduing all the estimates of 
norms. 

The analyticity properties of solutions of K-S equations have been studied
rigorously in \cite{Colletanalyticity, Grujic00} and it is shown to have
thermodynamicas properties and relations with the number of zeros, determining
modes, etc. 

\section{Some numerical examples.}
\label{section: numerical examples}

In this section we present some examples of the methods developed in Section
\ref{section: Newton scheme} for the computation of periodic orbits and in
Section \ref{section: implementation theorem} for the a posteriori verification
of them.

\subsection{Example of numerical computation. Period doubling.}

We have continued some branches of the doubling period bifurcation diagram,
shown in Subfigure a) in Figure \ref{figure: sections po cascade}. This has
been done by first computing some of the attracting orbits by integration, see
Section \ref{section: introduction}. These periodic orbits have been used as
seeds for our numerical algorithm. 

Specifically, for the values of the parameter $\frac 1\nu$ equal to $32.9$,
$33.1$ and $33.3$ we have computed 3 (attracting) periodic orbits at the first
3 stages of the period doubling cascades. Then, for each one of them, we have
continued them with our numerical algorithm.  With the help of Algorithm 1
we have been able to cross the period doubling bifurcations, where the
attracting orbits bifurcate to a doubled period one (that is attracting) and to
an unstable one. Our continuations are able to continue these unstable orbits.
See Figure \ref{figure: cascade with periodic orbits} for a representation of
these orbits in the period doubling cascade diagram and Figure \ref{figure:
sample periodic orbits} for the representation of two of these orbits.

The computational time of its validation takes no more than 30 seconds in a
single 2.7 GHz CPU of a regular laptop. We hope that thid could be used 
in the catalogue of periodic orbits computed in \cite{LanCvitanovic2008}. 
Note that, of course, validating different peridic orbits is 
verily easily paralellizable. 

\begin{figure}
\centering
\resizebox{100mm}{!}{\includegraphics[angle=270]
{./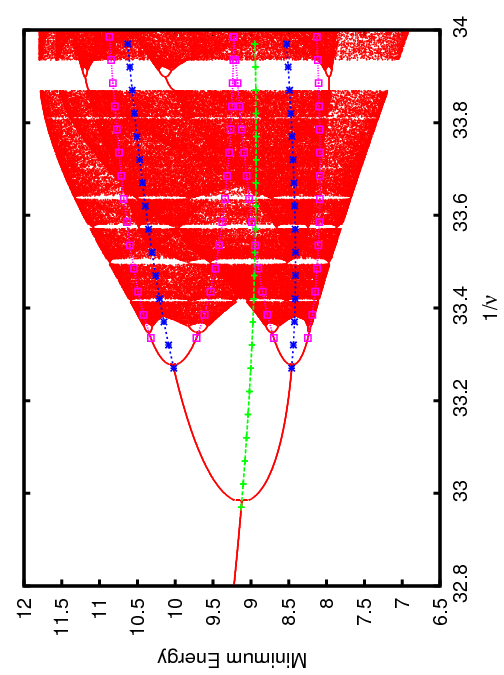}}
\caption{The continuation of the first 3 periodic orbits on the 
period doubling cascade.
These are superposed to the period doubling cascade using three 
different colors.}
\label{figure: cascade with periodic orbits}

\end{figure}

\begin{figure}
\resizebox{170mm}{!}{
\begin{tabular}{cc}
\resizebox{85mm}{!}{\includegraphics[type=png,ext=.png,read=.png,angle=270]
{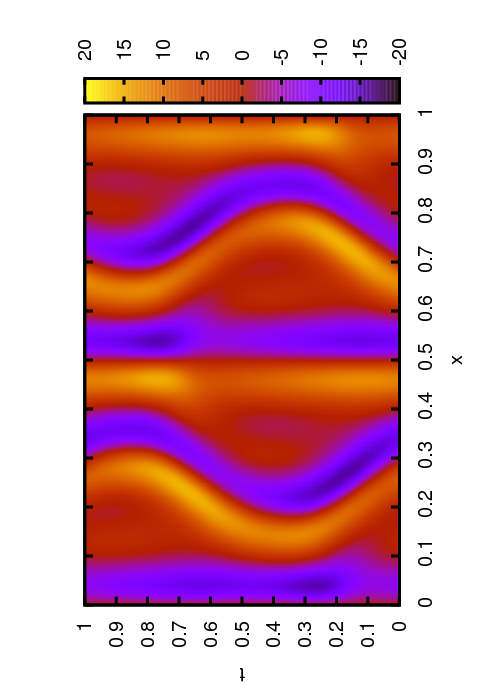}}&
\resizebox{85mm}{!}{\includegraphics[type=png,ext=.png,read=.png,angle=270]
{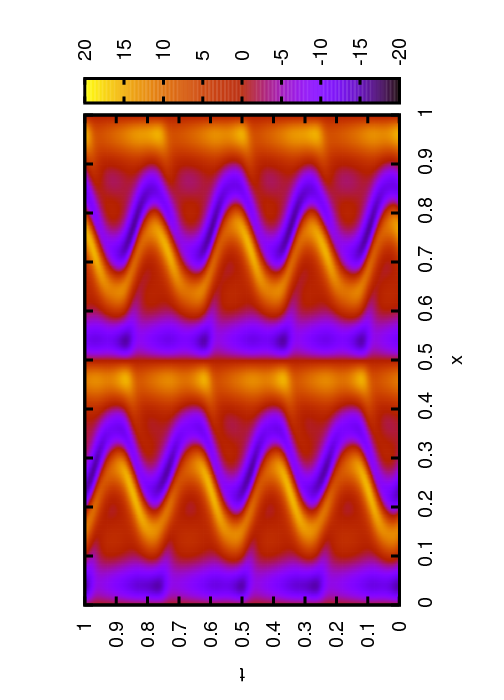}}\\
{$1/\nu = 33.27$. Period $=0.89893314191428$.}&
{$1/\nu = 33.5069$. Period $=3.49489164733297$.}\\
\end{tabular}
} \caption{Representation of two periodic orbits computed with the Newton
method. Colors represent the value of the orbit $u$ at the $(\theta, x)$
coordinate.}
\label{figure: sample periodic orbits}
\end{figure}

\subsection{Example of a validation.}

With the help of the algorithm presented in Section \ref{section:
implementation theorem} (and with the improvment trick explained in Subsection
\ref{subsection: improving}) we can validate the existence of some periodic
orbits.  For example, we validate the existence of a periodic orbit with $\frac
1\nu = 32.97$. The approximate data is given by $40$ $x$ modes and $(2\cdot
19+1)$ $t$ modes. The approximate period is $0.895839$. The validation has
been done with the finite matrix $\hat B$ with dimension 6994, and with $r =
s_1 = s_2 = 1\e{-12}$. 

The  output of the validation is:

\begin{itemize}
\item 
The error produced by the approximate periodic orbit is 
$\|S^{-1}_c \varepsilon\|_M \leq 8.489632\e{-10}$.

\item 
$K_1 \leq 8.189680\e{-3}$
,
$K_2 \leq 6.332728\e{-2}$
,
$K_3 \leq 6.693947$
.
\item
The error of the tails of the operator $\hat A$ is less than or equal to
$K_1\cdot c+K_2\cdot K_3 = 6.946684\e{-1}$.

\item
The norm of the approximate inverse of the linear operator $Id+\hat A$ is 
$\|\text{Id}+\hat B\|_M = 4.567111\e{1}$.

\item 
$\|\hat B+\hat A+\hat B \cdot \hat A\|_M = 6.716849\e{-14}$

\item
$\alpha = \|\text{Id}+\hat B + \hat A + \hat B\hat A\|_M = 6.946684\e{-1}$

\item 
$(1-\alpha)^2-4 e_1 e_2 = 9.321303\e{-2}$

\end{itemize}

As a result of the validation we obtain that the distance of the true periodic
orbit to the approximate solution is less than or equal to $1.269966\e{-7}$.

The computational time of one of these validations is no more than 1017 seconds
in a single 2.7 GHz CPU on a regular laptop.

Other validation results for other periodic orbits is shown in table
\ref{table: validation_results}.

\begin{table}
\begin{center}
\tiny
{
\begin{tabular}{| l | l | l | l | l | l | l | l |}
\hline
$\frac 1\nu$ &Period & 
$E$& 
Improved radius of analyticity&
Improved $E$
\\
\hline
$8.199953$ &
$2.992730$ &
$2.463363\e{-11}$ &
$1.512026\e{-4}$  & 
$1.002122\e{-9}$
\\
\hline
$8.230453$ &
$3.074450$ &
$4.385135\e{-11}$ &
$1.268306\e{-4}$ &
$6.199587\e{-9}$ 
\\
\hline
$31.00000$ &
$0.806901$ &
$8.642523\e{-10}$ &
$9.154580\e{-5}$ &
$1.043836\e{-8}$
\\
\hline
$32.97000$ &
$0.895839$ &
$1.269966\e{-7}$ &
$1.101236\e{-4}$ &
$2.902773\e{-6}$
\\
\hline
$33.27010$ &
$0.881170$ &
$1.049117\e{-7}$ &
$1.120727\e{-4}$ &
$4.092897\e{-6}$
\\
\hline
\end{tabular}
\caption{Validation results of some periodic orbits for different values of the
parameter $\nu$. The columns show: $\frac{1}\nu$, the period of the periodic
orbit, the radius of the ball obtained from Theorem \ref{thm: contraction 3}
computed with the norm $\|\cdot\|_{M}$ with $s_1=s_2=10^{-12}$ and
$r=10^{-12}$, the improved radius of analyticity obtained applying the trick
explained in Remark \ref{subsection: improving}, and the new radius of the ball
obtained from Theorem \ref{thm: contraction 3} computed with the norm
$\|\cdot\|_{M}$ with $s_1=s_2=10^{-12}$ and $r$ equal to the value in fourth
column. All validation took around 1000 seconds in a single 2.7 GHz CPU on a
regular laptop. Some of the periodic orbits that appear in this table appear
also in \cite{Piotr3}.} \label{table: validation_results} }
\end{center}
\end{table}

\appendix

\section{$L^1$ weighted spaces of periodic functions.}\label{section: appendix}

In this section we develop the theoretical framework developed in
\cite{ks_theoretical} for some concrete spaces.  The spaces have been chosen
have the properties that they  can encode analytic functions, the norms are
easily computable from Fourier series, have Banach algebra properties and the
norms of linear operators can be estimated easily from the matrix elements. A
technical, but sometimes useful property is that the dual is also a sequence
space so that all the functionals are represented by its matrices (that is,
there are no \emph{functionals at infinity}. See \cite{FontichLM11} for
examples and results on spaces based on $L^\infty$ which have functionals at
infinity such as the taking the limit). 

From now on, all norms of vectors in finite dimensional vector spaces will 
be the $L^1$-norm, 
in particular, the norm of a complex number is $|a+bi|=|a|+|b|$.

Let $X_{M}$, $r, s_1, s_2 \geq 0$, be the Banach space of periodic functions 
$u(\theta, x) = \sum_{(k_1, k_2)\in\mathbb{Z}^2} u_{k_1,k_2} 
e^{i (k_1\cdot x+k_2\cdot \theta)}$ with finite norm
\begin{equation*}
\|u\|_{M} = \sum_{(k_1, k_2) \in\mathbb{Z}^2} M(k_1, k_2)|u_{k_1, k_2}|,
\end{equation*}
where 
\[
M(k_1, k_2)=(1+|k_1|)^{s_1}(1+|k_2|)^{s_2} e^{r (|k_1|+|k_2|)}.
\]
Usually the values of $r, s_1, s_2$ are fixed. When there is no confusion, we
will denote the norm by $\|\cdot \|_{M}$, otherwise 
we will remark the dependency on the parameters $r, s_1, s_2$ by 
subscripts on the weight $M$ (e.g. $M_r$).

If $r> 0$ then $X_M$ is a subspace
of the analytic functions with complex band radius $r$. Also, if $s_1\leq
s_1'$, $s_2\leq s_2'$ and $r\leq r'$, then $X_M\subset X_{M'}$ and $\|u\|_M\leq
\|u\|_{M'}$.

\begin{rem}
For notational purposes, we present all the theory and analytic computations in
$X_{M}$ with the complex exponential basis, even though all the periodic
functions we work with are real. For accuracy efficiency, 
we implement our codes with the sine-cosine basis.  That is, the
periodic functions are represented as
\begin{equation*}
u(\theta, x) = \sum_{k_1 =1}^{\infty}\left(\sum_{k_2=0}^{\infty}
a_{k_1, k_2}\cos(k_2\theta)+b_{k_1, k_2}\sin(k_2\theta)\right)sin(k_1x).
\end{equation*}
With this basis, the norm defined above is  
\begin{equation*}
\|u\|_{M} = \sum_{1\leq k_1 < \infty, 0\leq k_2 < \infty}
\left(|a_{k_1, k_2}|+|b_{k_1, k_2}|\right)M(k_1, k_2).
\end{equation*}
For this choice of norm, all the estimates computed with the exponential 
basis remain valid 
with the sine-cosine basis.
\end{rem}

$X_{M}$ is a Banach algebra, 
$\|u\cdot v\|_{M} \leq \|u\|_{M} \|v\|_{M} $. This is a consequence 
of the fact that the weight $M$ is submultiplicative, 
$M(k_1+l_1, k_2+l_2) 
\leq M(k_1, k_2)\cdot M(l_1, l_2)$. 

If $T\colon X_{M}\rightarrow X_{M}$ is a linear operator with coefficients 
$\left\{T_{i, j}\right\}_{(i_1, i_2), (j_1, j_2)\in\mathbb{Z}^2}$, then 
its norm is 
\begin{equation*}
\|T\|_{M} = 
\sup_{(j_1, j_2)\in\mathbb{Z}^2} \dfrac{\sum_{(i_1, i_2)\in\mathbb{Z}^2}
|T_{(i_1, i_2), (j_1, j_2)}|M(i_1, i_2)}{M(j_1, j_2)}
.
\end{equation*}

Hence, for example, the norm of the multiplication operator 
$v\longrightarrow u\cdot v$ is 
\begin{equation}\label{eq: norm multiplication operator}
\sup_{(i_1, i_2)\in\mathbb Z^2} |u_{i_1, i_2}|M(i_1, i_2).
\end{equation}

\begin{rem}
The norm in \eqref{eq: norm multiplication operator} is sharper than the norm
estimates that used the property that $X_M$ is a Banach algebra.
\end{rem}

\begin{rem}
These norms scale very well with respect the weights: Given a trigonometric 
polynomial $u$ of dimensions $d_1\times d_2$, and $\hat r > r$, then 
$\|u\|_{M_{\hat r}}\leq \|u\|_{M_{r}}e^{(\hat r-r)d_1 d_2}$. Similarly, 
for a finite dimensional matrix $T$ of sizes $d\times d$, 
$\|T\|_{M_{\hat r}}\leq \|T\|_{M_r}e^{(\hat r-s)d}$.
\end{rem}

\subsection{Two lemmas for the validation algorithm.}

The following two lemmas are using for the computation of some preliminary 
estimates for the validation 
algorithm in Section \ref{section: implementation theorem}.
\begin{lem}\label{lem: banach1}
Let $p(x)=\nu x^4-x^2+c$ and  
$S_c = f\partial_\theta+\nu\partial_x^4+\partial_x^2+c\Id$, 
with $c = \frac1\nu$. 
Then it is satisfied that
\begin{enumerate}
\item 
\begin{equation*}
\|\mathbb{P}_{(>d_1, >d_2)}S_c^{-1}\mathbb{P}_{(>d_1, >d_2)}\|
_{M}\leq
\sqrt2\max\left\{\max_{x > d_1}
\left\{\dfrac1{p(x)}\right\}, \dfrac1{f(d_2+1)}\right\}.
\end{equation*}
\item 
\begin{equation*}
\|\mathbb{P}_{(>d_1, >d_2)}S_c^{-1}\partial_x\mathbb{P}_{(>d_1, >d_2)}\|
_{M}\leq
\sqrt2 \left(\frac{4}{3\nu}\right)^{\frac14}
\left(\max\left\{\max_{x > d_1}
\left\{\dfrac1{p(x)}\right\}, \dfrac1{f(d_2+1)}\right\}\right)^{\frac34}.
\end{equation*}

\end{enumerate}
\end{lem}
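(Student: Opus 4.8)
The plan is to exploit that $S_c$ is diagonal in the Fourier basis and thereby reduce both estimates to a scalar supremum over the high-frequency modes. Writing $u=\sum u_{k_1,k_2}e^{i(k_1x+k_2\theta)}$, the operators $\partial_x,\partial_\theta$ act as multiplication by $ik_1,ik_2$, so $S_c$ is diagonal with entry $ifk_2+p(k_1)$ on the mode $(k_1,k_2)$, where $p(x)=\nu x^4-x^2+c$. Hence $S_c^{-1}$, $S_c^{-1}\partial_x$, and their compressions by $\mathbb{P}_{(>d_1,>d_2)}$ are all diagonal, with entries $1/(ifk_2+p(k_1))$ and $ik_1/(ifk_2+p(k_1))$ respectively, restricted to the complement of the low-frequency block. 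By the operator-norm formula for $\|\cdot\|_M$ recorded earlier in the appendix, the weight $M(j_1,j_2)$ cancels against itself for a diagonal operator, so each norm equals the supremum over the high-frequency modes of the ($L^1$) modulus of the corresponding diagonal entry.

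Next I would compute these moduli explicitly using the convention $|a+bi|=|a|+|b|$. Setting $P=|p(k_1)|$ and $Q=f|k_2|$ and rationalizing, the entry of $S_c^{-1}$ has modulus $(P+Q)/(P^2+Q^2)$, and that of $S_c^{-1}\partial_x$ has modulus $|k_1|(P+Q)/(P^2+Q^2)$. The first elementary step is $P+Q\le\sqrt2\,\sqrt{P^2+Q^2}$, which converts both moduli into $\sqrt2\,(P^2+Q^2)^{-1/2}$ times $1$, respectively $|k_1|$. I would then bound $\sqrt{P^2+Q^2}$ from below over the high-frequency set by splitting into the two cases defining it: if $|k_1|>d_1$ then $\sqrt{P^2+Q^2}\ge p(k_1)\ge\min_{x>d_1}p(x)$, while if $|k_2|>d_2$ then $\sqrt{P^2+Q^2}\ge f|k_2|\ge f(d_2+1)$. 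Taking reciprocals and the maximum over the two cases yields the factor $\max\{\max_{x>d_1}\{1/p(x)\},\,1/(f(d_2+1))\}$, which together with the $\sqrt2$ gives Part 1.

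For Part 2 the extra factor $|k_1|$ must be absorbed, and this is the step I expect to be the crux. The key pointwise inequality is $k_1^4\le\frac{4}{3\nu}p(k_1)$, equivalently $k_1^4-\frac{4}{\nu}k_1^2+\frac{4}{\nu^2}\ge0$; because $c=1/\nu$ the left side factors as the perfect square $(k_1^2-2/\nu)^2\ge0$ and hence holds for every integer $k_1$ (and in particular forces $p(k_1)>0$). This gives $|k_1|\le(4/(3\nu))^{1/4}p(k_1)^{1/4}$. Substituting this and using $P^2\le P^2+Q^2$ to rewrite $P^{1/4}(P^2+Q^2)^{-1/2}$ as $(P^2+Q^2)^{-3/8}=\bigl((P^2+Q^2)^{-1/2}\bigr)^{3/4}$, I would bound the diagonal modulus by $\sqrt2\,(4/(3\nu))^{1/4}\bigl((P^2+Q^2)^{-1/2}\bigr)^{3/4}$. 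Feeding in the same high-frequency lower bound on $\sqrt{P^2+Q^2}$, now raised to the power $3/4$, produces the stated estimate.

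The routine parts are the explicit computation of the two $L^1$-moduli and the two-case lower bound on $\sqrt{P^2+Q^2}$; the only genuinely delicate point is discovering and verifying the factorization $k_1^4-\frac{4}{\nu}k_1^2+\frac{4}{\nu^2}=(k_1^2-2/\nu)^2$, which is precisely where the choice $c=1/\nu$ enters and which simultaneously manufactures both the constant $(4/(3\nu))^{1/4}$ and the exponent $3/4$ appearing in Part 2.
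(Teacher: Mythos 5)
Your proposal is correct and follows essentially the same route as the paper's proof: reduce to the supremum of the ($L^1$-convention) moduli of the diagonal Fourier entries, bound $(P+Q)/(P^2+Q^2)$ by $\sqrt2/\max\{P,Q\}$ via $P+Q\le\sqrt2\sqrt{P^2+Q^2}$ with the two-case split over the region $|k_1|>d_1$ or $|k_2|>d_2$, and for Part 2 absorb the factor $|k_1|$ through the inequality $x\le(4/(3\nu))^{1/4}p(x)^{1/4}$. Your verification of that last inequality via the perfect-square factorization $(k_1^2-2/\nu)^2\ge0$ (which is exactly where $c=1/\nu$ is used) is a detail the paper asserts without proof, so your write-up is in fact more complete than the published one.
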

\begin{proof}
The first one follows from the fact that an upper bound of 
\begin{equation*}
\max_{x > d_1 \text{ or } y > d_2}\dfrac{fy+p(x)}{(fy)^2+p(x)^2},
\end{equation*}
is a consequence of the inequalities
\begin{equation*}
\begin{array}{ll}
\dfrac{fy+p(x)}{(fy)^2+p(x)^2}\leq \dfrac{\sqrt{2}}{fy}
&\\
\dfrac{fy+p(x)}{(fy)^2+p(x)^2}\leq \dfrac{\sqrt{2}}{p(x)}
&
\end{array}
.
\end{equation*}

The second upper bound follows in a similar way as the first one 
but considering that  
\begin{equation*}
\begin{array}{l}
\max_{x > d_1 \text{ or } y > d_2}\dfrac{fy+p(x)}{(fy)^2+p(x)^2}x\\ 
\leq \max_{x > d_1 \text{ or } y > d_2}
\dfrac{fy+p(x)}{(fy)^2+p(x)^2}\frac{x}{p(x)^{\frac14}}p(x)^{\frac14}\\ 
\leq \left(\dfrac{4}{3\nu}\right)^{\frac14}
\max_{x > d_1 \text{ or } y > d_2}
\dfrac{fy+p(x)}{(fy)^2+p(x)^2}p(x)^{\frac14}.\\ 
\end{array}
\end{equation*}
\end{proof}

\begin{lem}\label{lem: banach2}
Let $p(x)=\nu x^4-x^2+c$ and  
$S_c = f\partial_\theta+\nu\partial_x^4+\partial_x^2+c\Id$, 
with $c = \frac1\nu$. 
Then it is satisfied that
\begin{enumerate}
\item 
\begin{equation*}
\|S_c^{-1}\partial_x\|
_{M}\leq
\sqrt2 \left(\frac{4}{3\nu}\right)^{\frac14}.
\end{equation*}
\item 

\begin{equation*}
\|S_c^{-1}\partial_\theta\|
_{M}\leq
\frac{\sqrt2}{f}.
\end{equation*}

\end{enumerate}
\end{lem}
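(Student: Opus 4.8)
The plan is to exploit that $S_c$, $\partial_x$ and $\partial_\theta$ are simultaneously \emph{diagonal} in the Fourier basis $e^{i(k_1 x + k_2\theta)}$. First I would record the symbols: since $\partial_x \mapsto ik_1$, $\partial_x^2 \mapsto -k_1^2$, $\partial_x^4 \mapsto k_1^4$ and $\partial_\theta \mapsto ik_2$, the operator $S_c$ multiplies the mode $(k_1,k_2)$ by $p(k_1) + i f k_2$. Hence $S_c^{-1}\partial_x$ and $S_c^{-1}\partial_\theta$ act as multiplication by the complex diagonal entries $\frac{ik_1}{p(k_1)+ifk_2}$ and $\frac{ik_2}{p(k_1)+ifk_2}$ respectively. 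Note that with $c=\frac1\nu$ one has $p(k_1) > 0$ for every $k_1$, because the minimum of $p$ over $\mathbb{R}$ equals $\frac{3}{4\nu} > 0$, so the inverses make sense.

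For a diagonal operator the norm formula from Appendix \ref{section: appendix} collapses to the supremum over modes of the $L^1$-operator norm of multiplication by the corresponding diagonal entry $d$. Since multiplication by $d = a + ib$ on $\mathbb{C} \cong \mathbb{R}^2$ with the $L^1$ norm has operator norm $|a| + |b|$, and $|a| + |b| \le \sqrt2\,\sqrt{a^2 + b^2} = \sqrt2\,|d|$, I would bound each operator norm by $\sqrt2$ times the supremum of the \emph{modulus} of its diagonal entries. This is precisely where the ubiquitous factor $\sqrt2$ comes from. Part 2 is then immediate: the modulus of the $(k_1,k_2)$-entry of $S_c^{-1}\partial_\theta$ is $\frac{|k_2|}{\sqrt{p(k_1)^2 + f^2 k_2^2}} \le \frac{|k_2|}{f|k_2|} = \frac1f$ (the entry vanishes when $k_2 = 0$), which yields $\|S_c^{-1}\partial_\theta\|_M \le \frac{\sqrt2}{f}$.

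For Part 1, the modulus of the entry of $S_c^{-1}\partial_x$ is $\frac{|k_1|}{\sqrt{p(k_1)^2 + f^2 k_2^2}} \le \frac{|k_1|}{p(k_1)}$, and I would factor it exactly as in the proof of Lemma \ref{lem: banach1}, writing $\frac{|k_1|}{p(k_1)} = \frac{|k_1|}{p(k_1)^{1/4}}\cdot p(k_1)^{-3/4}$. The first factor is controlled by the elementary calculus fact $\frac{x^4}{p(x)} \le \frac{4}{3\nu}$ — whose maximum is attained at $x^2 = 2/\nu$, where $p = 3/\nu$ — giving $\frac{|k_1|}{p(k_1)^{1/4}} \le \left(\frac{4}{3\nu}\right)^{1/4}$. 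Multiplying by $\sqrt2$ produces the claimed bound $\sqrt2\left(\frac{4}{3\nu}\right)^{1/4}$.

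The one point that needs care is the residual factor $p(k_1)^{-3/4}$: the clean constant requires $p(k_1) \ge 1$, which I would justify from $p(k_1) \ge p_{\min} = \frac{3}{4\nu}$, valid in the regime under study where $1/\nu$ is large. I expect this bookkeeping of the constant — reconciling the factorization with the stated clean bound — to be the only subtle step; the diagonalization and the $\sqrt2$-conversion are routine, and Part 2 is essentially free once the diagonal structure is in place.
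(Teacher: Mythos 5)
Your proof is correct and follows essentially the same route as the paper's: diagonalize all three operators in the Fourier basis, convert the $L^1$ modulus of each diagonal entry into $\sqrt2$ times its Euclidean modulus (the source of the $\sqrt2$), bound by $\sqrt2/p(k_1)$ (resp.\ $\sqrt2/(f|k_2|)$), and absorb the factor $|k_1|$ via the calculus fact $x^4/p(x)\le 4/(3\nu)$. The step you flagged as subtle---discarding the leftover factor $p(k_1)^{-3/4}$, which needs $p(k_1)\ge 1$ and hence holds since $p\ge 3/(4\nu)$ in the regime $\nu\le 3/4$ studied here---is indeed necessary and is left entirely implicit in the paper's own terse proof, so your treatment is if anything slightly more careful than the original.
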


\begin{proof}
The first upper bound follows from the observation that 
\begin{equation*}
\dfrac{fy+p(x)}{(fy)^2+p(x)^2}x
\end{equation*}
is less than or equal to 
\begin{equation*}
\left(\frac{4}{3\nu}\right)^{\frac14}
\dfrac{fy+p(x)}{(fy)^2+p(x)^2}p(x)^{\frac14}.
\end{equation*}
The second one is proved similarly.

\end{proof}

\section{Fast interval matrix multiplication algorithms.}
\label{section: multiplication}

The naive multiplication of two high dimensional ($\approx 5000\times5000$)
interval matrices is very inefficient.  Here we present a fast algorithm that
we have used for (rigorously) multiplying interval matrices.  It is Algorithm
4.5 in \cite{Rump_matrices_0}.

The algorithm relies on a clever usage of the fast \verb$double$ floating point
based matrix multiplication software \verb$BLAS$ \cite{blas3}.  It combines the
fast algorithms in \verb$BLAS$ and rounding flags.

Let $\boldsymbol{A} = [A_1, A_2]$ and $\boldsymbol B=[B_1, B_2]$ be two
interval matrices, where inside brackets we have written the lower and upper
point matrices.  (In bold letters we denote interval matrices, and in plain
ones \verb$double$ matrices).  The algorithm produces an interval matrix
$\boldsymbol C=[C_1, C_2]$ satisfying $\boldsymbol A\cdot \boldsymbol
B\subseteq\boldsymbol C$. 

\begin{algorithm}{\ }
\begin{itemize}
\item[1)] Set the rounding up.
\item[2)] Compute the \verb$double$ matrices:
\begin{itemize}
\item[] $mA=(A_1+A_2)/2$.
\item[] $rA=mA-A_1$.
\item[] $mB=(B_1+B_2)/2$.
\item[] $rB=mB-B_1$.
\item[] $rC=|mA|\cdot rB+rA\cdot(|mB+rB|)$, where $|\cdot|$ denotes the 
matrix with absolute values 
in all its entries.
\end{itemize}
\item[3)] $C_2=mA\cdot mB+rC$.
\item[4)] Set the rounding down.
\item[5)] $C_1=mA\cdot mB-rC$.
\end{itemize}
\end{algorithm}

All multiplications should be done using \verb$BLAS$.  Notice that this
algorithm requires 4 \verb$double$ matrix multiplications.

\section*{Acknowledgments}
R. L. was partially supported by NSF grant DMS-1500493. J.-L. F.  was partially
supported by Essen, L. and C.-G., for mathematical studies. We are very
grateful to W. Tucker for pointing out the existence of the fast interval
matrix multiplication algorithms. We are also grateful for useful discussions
with G. Arioli, P. Cvitanovic, M. Gameiro, J. Gomez-Serrano, A.Haro, H. Koch,
J.-P. Lessard, K.  Mischaikow, W. Tucker and P. Zgliczynski.

\bibliography{./bibliography}{} 
\bibliographystyle{abbrv}

\end{document}